\theoremstyle{thmstyleone}%
\newtheorem{theorem}{Theorem}
\newtheorem{proposition}[theorem]{Proposition}%
\theoremstyle{thmstyletwo}%
\newtheorem{remark}{Remark}%
\theoremstyle{thmstylethree}%
\DeclareMathOperator{\ran}{ran}
\DeclareMathOperator{\coker}{coker}
\newcommand{\orders}[1]{[#1]}
\numberwithin{equation}{section}
\newcommand{\commRV}[1]{{\leavevmode\color{black}#1}} 
\renewcommand\footnotemark{}
\renewcommand{\@biblabel}[1]{[#1]} 
\renewcommand\cite{\citep}          
\begin{document}

\title[Inversion of an analytic operator function through Fredholm quotients and its application]{Inversion of an analytic operator function through Fredholm quotients and its application}


\author*[1]{\fnm{Won-Ki} \sur{Seo}}



\affil*[1]{\orgdiv{School of Economics}, \orgname{University of Sydney}}
\newcommand{\revlab}[1]{\phantomsection\label{#1}} 



\abstract{We characterize the inverse of an analytic Fredholm operator-valued function $A(z)$ near an isolated singularity within a general Banach space framework. Our approach relies on the sequential factorization of $A(z)$ via Fredholm quotient operators. By analyzing the properties of these quotient operators near $z=z_0$, we fully characterize the Laurent series expansion of the inverse of  $A(z)$ in terms of its Taylor coefficients around the singularity. These theoretical results are subsequently applied to characterize the solution of a general autoregressive law of motion in a Banach space.}

\keywords{Fredholm operator functions, isolated singularity, Laurent series, Granger-Johansen representation theorem}



\maketitle

	\section{Introduction}\label{intro}
Let \(\mathcal B\) be a complex Banach space and \(\mathcal L_{\mathcal B}\) be the space of bounded linear operators acting on \(\mathcal B\) with operator norm $\|\cdot\|_{\mathcal L_{\mathcal B}}$.  Let \(A(z)\) be an analytic \(\mathcal L_{\mathcal B}\)-valued function defined on an open and connected subset $U$ of \(\mathbb{C}\). We then consider the Taylor series of $A(z)$ around \(z_0 \in U\) as follows:
\begin{equation}\label{eq01}
A(z)  = \sum_{j= 0}^\infty A_{j,z_0}(z-z_0)^j, \quad A_{j,z_0} \in \mathcal L_{\mathcal B}, \quad z\in U.
\end{equation}  
This paper studies the inversion of the above operator-valued function when $z_0$ is an isolated singularity and the derivation of the expression of $A(z)^{-1}$ around $z=z_0$. Instances of this problem arise in various applications, such as asymptotic linear programming [\citealp{lamond1989generalized, lamond1993efficient}], Markov chains [\citealp{avrachenkov1999fundamental}], and linear control theory [\citealp{howlett1982input}]. Additional examples can be found in \citet[Section 1]{GONZALEZRODRIGUEZ2015307} and \citet[Section 1]{FRANCHI20112896}, and we also discuss a specific example in Section \ref{sec_motive}.

Specifically, this paper aims to provide a closed-form expression for \(A(z)^{-1}\) around an isolated singularity $z_0$ when \(A(z)\) is a Fredholm-valued function (i.e.,\(\dim(\ker A(z))<\infty\) and \(\dim(\coker A(z))<\infty\) for \(z\in U\)). We provide a recursive formula for determining the coefficients in the Laurent series of \(A(z)^{-1}\) around $z=z_0$, in terms of the Taylor coefficients \(\{A_{j,z_0}\}_{j\geq 1}\) appearing in \eqref{eq01} and the associated projections, for any order of the pole at \(z=z_0\); moreover, we also demonstrate that the order of the pole can be characterized by those operators. These results are achieved by the factorization of a Fredholm operator-valued function into a perturbed identity operator and a Fredholm quotient, along with a closed-form expression for the latter in terms of \(\{A_{j,z_0}\}_{j\geq 1}\). As detailed in Section \ref{sec_liter}, this approach not only distinguishes our inversion results from existing ones but also provides a useful recasting of previous findings in a Hilbert/Euclidean space setting.

\subsection{Previous Work}\label{sec_liter}

\commRV{
Inversion of an operator-valued function has been studied in various contexts. Particularly, when $A(z)$ is a linear operator pencil (i.e., $A_j=0$ for $j\geq 2$), much is already known in a Hilbert space setting [\citealp{FRANCHI2020238,HOWLETT200968}] and in a more general Banach space setting [\citealp{albrecht2011necessary,ALBRECHT2014411,albrecht2019fundamental,ALBRECHT202033}], and these methods often extend to more general cases. Specifically, although \cite{ALBRECHT2014411} and \cite{HOWLETT200968} primarily focus on linear pencils, they provide a detailed framework for reformulating polynomial pencils as linear ones on an augmented space. This approach yields a closed-form expression for the desired inverse. Subsequently, the results in the aforementioned papers were extended to cases involving essential singularities \citep{Albrecht2020JAMS, ALBRECHT202033}. The theoretical results in \cite{howlett1982input} extensively cover the inversion of polynomial or analytic matrix-valued functions; see also \cite{avrachenkov1999fundamental} for a study on singularly perturbed finite state Markov chains. Inversion results for analytic operator-valued functions in a finite-dimensional setting can also be found in \citep{Avrachenkov2001, FRANCHI20112896, franchi2016, GONZALEZRODRIGUEZ2015307}. A more extensive treatment of this topic, covering both finite-dimensional and infinite-dimensional (Hilbert or Banach) spaces, can be found in \cite{AvrFiHow13} (see Chapters 2 and 8, respectively).

 More recently, \cite{BS2018} and \cite{Franchi2017b} consider the inversion of an analytic Fredholm-valued function in a Hilbert space setting in order to characterize solutions to an autoregressive (AR) law of motion applied to function-valued random elements. Subsequently, \cite{albrecht2021resolution} developed a more general characterization of the solution in a more general setting (including essential singularities) concerning polynomial operator-valued functions. There are also some results on the inversion of $A(z)^{-1}$ in a Banach space setting for Fredholm-valued functions [\citealp{Gohberg2013,seo_2023_fred}] and for possibly non-Fredholm-valued functions [\citealp{seo_2022}]. However, the approach for obtaining a closed-form expression of $A(z)^{-1}$ in the present paper is, as will become apparent, not only distinct from existing ones, but also the theoretical results to be developed do not have substantial overlap with the existing literature.}


\subsection{An Example}\label{sec_motive}
Suppose that a random sequence $X_t$, taking values in a Banach space, satisfies an AR law of motion given by 
\begin{equation} \label{eq001}
\sum_{j=0}^\infty A_{0,j} X_{t-j}  = \varepsilon_t, \quad t\geq 1,
\end{equation}
where $\varepsilon_t$ is another sequence of random elements that typically exhibits simpler dynamics than $X_t$, such as an independently and identically distributed sequence.
Then the operator-valued function 
\begin{equation} \label{eq001add}
A(z) = \sum_{j=0}^\infty A_{0,j} z^j
\end{equation}
 is called the characteristic \commRV{series}\revlab{r1_major3a} of the AR law of motion. The case where $A(z)$ is invertible for every $z$ satisfying $|z|\leq 1 +\eta$ for some $\eta>0$ but not for $z=1$ has received considerable attention in the literature on time series analysis. In this case, it turns out that the behavior of solutions to the AR law of motion \eqref{eq001} crucially depends on the local behavior of $A(z)^{-1}$ around $z=1$ (see e.g., \cite{Schumacher1991}). A complete characterization of such solutions (except for the initial condition at time zero) reduces to answering the following two questions: (i) what is the order of the pole at $z=1$, and (ii) what is an explicit expression for the Laurent series of $A(z)^{-1}$ around $z=1$?

When $\varepsilon_t$ is understood as a stationary sequence, (i) and (ii) are the key questions of the so-called Granger-Johansen representation theory in the literature on time series analysis, which has been studied either in a (finite-dimensional) Euclidean space [\citealp{BeareHowlett2026,engle1987,Johansen1991,johansen1992representation,franchi2016,Franchi2017a}]  or a Hilbert space [\citealp{BSS2017,BS2018,Franchi2017b}] or a Banach space [\citealp{seo_2022,seo_2023_fred,albrecht2021resolution}] setting. A recent work by \cite{seo_2023_fred} appears to be successful to some extent in this direction, as it provided answers to (i) and (ii) in a Banach space setting when \(A(z)\) is Fredholm and the pole order is one or two. As shown in that paper, answering these questions requires solving a certain system of operator equations using properly defined generalized inverses. However, since those results are derived using operator algebra after specifying the pole order as one or two, they do not, in general, characterize \(A(z)^{-1}\) near a pole of any arbitrary order in a unified framework. The present paper overcomes this limitation by providing a unified approach to characterizing \(A(z)^{-1}\).

We have only considered the case where \(A(z)\) has a real unit root (i.e., \(A(z)\) is not invertible at \(z=1\)). Additionally, there are other papers concerning different cases, such as a complex unit root (see, e.g., \cite{gregoir1999multivariate,bierens2001complex}),  with potential applications in statistical time series analysis. In this literature, the local behavior of \(A(z)^{-1}\) is crucial for understanding the behavior of the random sequence determined by \eqref{eq001}.

\section{Fredholm quotients}\label{sec_facto}
We hereafter always assume that \(A(z)\) is an analytic (i.e., complex-differentiable) operator-valued function given in \eqref{eq01} and, for each \(z \in U\), \(A(z)\) is a Fredholm operator acting on a separable Banach space \(\mathcal B\), where \(U\) is an open and connected set. 
The Fredholm property of \(A(z)\) aligns with assumptions made in some preceding articles (see, e.g., \cite{BS2018,Franchi2017b}) that study the inversion of analytic operator-valued functions. It is noteworthy that any linear operator acting on a finite-dimensional vector space is Fredholm.

In practice, it is of particular interest to characterize \(A(z)^{-1}\) using the coefficients \(\{A_{j,z_0}\}_{j \geq 0}\) in \eqref{eq01}, and this is precisely the focus of the present paper. 
The subsequent theoretical results primarily rely on the factorization of the Fredholm-valued function, as described in the following proposition: 
	\begin{proposition}\label{propo1}  \commRV{Suppose that \(A(z)\) in \eqref{eq001add} is invertible for any $z\in U \setminus \{z_0\}$ but not at $z=z_0 \in U$}. Let \(P_1\) denote any projection on \(\ran A(z_0)\) and let \(Q_1 = I - P_1\). Then there exists an analytic index-zero Fredholm operator-valued function \(A^{\orders{1}}(z)\) satisfying
		\begin{equation} 
		A(z) = [P_1 + (z-z_0)Q_1]A^{\orders{1}}(z) \label{fredfac}
		\end{equation}
\commRV{for $z \in U\setminus\{z_0\}$. 
$A^{\orders{1}}(z)^{-1}$ has a pole of order $d-1$ at $z=z_0$ if $A(z)^{-1}$ has a pole of order $d$ at $z=z_0$. } 
	\end{proposition}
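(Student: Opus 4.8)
The plan is to construct $A^{\orders{1}}(z)$ explicitly as $M(z)^{-1}A(z)$, where $M(z) := P_1 + (z-z_0)Q_1$ is the bracketed factor in \eqref{fredfac}, and then to verify in turn that it is analytic at $z_0$, that it is index-zero Fredholm for each $z$, and that its inverse has a pole of order exactly $d-1$. First I would note that because $P_1$ and $Q_1 = I - P_1$ are complementary projections (so $P_1^2 = P_1$, $Q_1^2 = Q_1$, and $P_1 Q_1 = Q_1 P_1 = 0$), a direct multiplication shows that $M(z)$ is invertible for every $z \neq z_0$ with $M(z)^{-1} = P_1 + (z-z_0)^{-1}Q_1$, so that $M(z)^{-1}$ has a simple pole at $z_0$.

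The analyticity of $A^{\orders{1}}(z)$ rests on the choice of $Q_1$. Since $\ker Q_1 = \ran P_1 = \ran A(z_0)$ and $A_{0,z_0} = A(z_0)$ maps into $\ran A(z_0)$, we have $Q_1 A_{0,z_0} = 0$. Writing $A^{\orders{1}}(z) = P_1 A(z) + (z-z_0)^{-1}Q_1 A(z)$ and substituting the Taylor series of $A(z)$, the term $(z-z_0)^{-1}Q_1 A(z) = \sum_{j\geq 1}Q_1 A_{j,z_0}(z-z_0)^{j-1}$ is holomorphic at $z_0$ precisely because the $j=0$ contribution vanishes. Hence $A^{\orders{1}}(z) = \sum_{j\geq 0}(P_1 A_{j,z_0} + Q_1 A_{j+1,z_0})(z-z_0)^j$ is analytic, with $A^{\orders{1}}(z_0) = A_{0,z_0} + Q_1 A_{1,z_0}$, and the factorization \eqref{fredfac} then holds on the whole neighborhood by continuity. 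For the Fredholm claim I would observe that for $z \neq z_0$ the operator $A^{\orders{1}}(z) = M(z)^{-1}A(z)$ is a product of invertibles, hence invertible and so index-zero Fredholm; at $z_0$, the operator $A^{\orders{1}}(z_0) = A_{0,z_0} + Q_1 A_{1,z_0}$ is a finite-rank perturbation of $A(z_0)$ (here $Q_1$ has finite rank because $\ran A(z_0)$ has finite codimension), and since the Fredholm index is locally constant and $A(z)$ is invertible on the punctured neighborhood, $A(z_0)$ has index zero; therefore so does $A^{\orders{1}}(z_0)$.

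The crux is the pole order of $A^{\orders{1}}(z)^{-1}$. Writing the Laurent expansion $A(z)^{-1} = \sum_{k\geq -d}N_k(z-z_0)^k$ with $N_{-d}\neq 0$, I would work with the two relations $A^{\orders{1}}(z)^{-1} = A(z)^{-1}M(z) = A(z)^{-1}[P_1 + (z-z_0)Q_1]$ and $A(z)^{-1} = A^{\orders{1}}(z)^{-1}M(z)^{-1} = A^{\orders{1}}(z)^{-1}[P_1 + (z-z_0)^{-1}Q_1]$, both valid for $z\neq z_0$. For the upper bound, comparing coefficients in the first relation shows that the coefficient of $(z-z_0)^{-d}$ in $A^{\orders{1}}(z)^{-1}$ equals $N_{-d}P_1$ (the $Q_1$ part starts at order $-d+1$). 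But extracting the most negative coefficient of $A(z)^{-1}A(z)=I$ yields $N_{-d}A_{0,z_0}=0$, i.e. $N_{-d}$ annihilates $\ran A(z_0)=\ran P_1$, so $N_{-d}P_1 = 0$ and the pole order is at most $d-1$. For the matching lower bound, if $A^{\orders{1}}(z)^{-1}$ had pole order $p$, the second relation would give $A(z)^{-1}$ a pole of order at most $p+1$; since this order is exactly $d$, we get $p\geq d-1$. The two bounds give $p = d-1$, which reads as $A^{\orders{1}}(z_0)$ being invertible in the boundary case $d=1$.

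I expect this last pole-order computation to be the main obstacle, since both directions require carefully extracting the extreme Laurent coefficients of operator products, and the upper bound in particular relies on combining the identity $N_{-d}A_{0,z_0}=0$ with the specific range of $P_1$. The remaining ingredients—the convergence and Cauchy-product manipulations of the operator series, the standard fact that the inverse of an analytic Fredholm-valued function is meromorphic with a finite-order pole at an isolated singularity, and the local constancy of the Fredholm index—are routine, and I would invoke rather than reprove them.
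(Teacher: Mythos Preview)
Your proof is correct and follows the same overall strategy as the paper: define $A^{\orders{1}}(z) = [P_1 + (z-z_0)^{-1}Q_1]A(z)$, check analyticity via $Q_1 A_{0,z_0} = 0$, establish the index-zero Fredholm property through a finite-rank (hence compact) perturbation of $A(z)$, and pin down the pole order of $A^{\orders{1}}(z)^{-1}$ by exploiting the two relations $A^{\orders{1}}(z)^{-1} = A(z)^{-1}M(z)$ and $A(z)^{-1} = A^{\orders{1}}(z)^{-1}M(z)^{-1}$. The one substantive tactical difference lies in the upper bound for the pole order. The paper bounds $|z-z_0|^{d-1}\|A(z)^{-1}P_1 x\|$ by writing $P_1 x = A(z_0)y$ and invoking the uniform boundedness principle to conclude that $A(z)^{-1}P_1$ has a pole of order at most $d-1$. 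You instead extract the leading Laurent coefficient algebraically: from $A(z)^{-1}A(z)=I$ the $(z-z_0)^{-d}$ coefficient gives $N_{-d}A_{0,z_0}=0$, hence $N_{-d}P_1=0$, so the $(z-z_0)^{-d}$ term of $A^{\orders{1}}(z)^{-1}=A(z)^{-1}[P_1+(z-z_0)Q_1]$ vanishes. Your route is more elementary, avoiding any functional-analytic principle and working purely with formal coefficient identities; the paper's route, by contrast, never needs to name the Laurent coefficients of $A(z)^{-1}$ and produces a norm estimate directly.
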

\revlab{r1_major4}\commRV{
\begin{remark}\label{remadd1}
Several insightful observations and discussions, which complement Proposition \ref{propo1}, were provided by an anonymous referee. First, while Proposition \ref{propo1} implicitly assumes that the projections $P_1$ and $Q_1$ are well-defined, this does not pose any theoretical issues; any Fredholm operator has a finite-dimensional cokernel, which is a sufficient condition for the existence of such projections (see \cite{megginson1998}, Theorem 3.2.18). Second, the fact that $A(z)$ is invertible for $z \in U \setminus \{z_0\}$ implies that $A(z)$ in Proposition \ref{propo1} is necessarily Fredholm of index zero for all $z \in U$. Third, as discussed in the proof, the operator-valued function $A^{\orders{1}}(z)$ in Proposition \ref{propo1} is given by $A^{\orders{1}}(z) = [P_1 + (z-z_0)^{-1}Q_1] A(z)$ for $z \in U \setminus \{z_0\}$, ensuring that \eqref{fredfac} holds. This necessarily implies that if $A^{\orders{1}}(z)$ is invertible, then $A(z)^{-1} = A^{\orders{1}}(z)^{-1} [P_1 + (z-z_0)Q_1]$; consequently, $A(z)^{-1}$ has a pole of order 1 at $z=z_0$. Combined with \eqref{eq001add}, the above expression for $A^{\orders{1}}(z)$ implies that $A^{\orders{1}}(z)$ can be written as $A^{\orders{1}}(z) = \sum_{j=0}^\infty (z-z_0)^j A^{\orders{1}}_{j,z_0}$ (where $A^{\orders{1}}_{j,z_0} = P_1 A_{j,z_0} + Q_1 A_{j+1,z_0}$), thereby establishing the analyticity of $A^{\orders{1}}(z)$ and providing an alternative to the corresponding proof in the Appendix. Note also that, based on the aforementioned expression, $A^{\orders{1}}(z)$ can be written as $A^{\orders{1}}(z) = A_{0,z_0}^{\orders{1}} + S(z)$, where $S(z)=\sum_{j=1}^{\infty}(z-z_0)^jA^{\orders{1}}_{j,z_0}$. Then, from a Neumann series expansion, the invertibility of $A_{0,z_0}^{\orders{1}}$ implies that $A^{\orders{1}}(z)^{-1}$ exists for $z$ close enough to $z_0$ (specifically, $A^{\orders{1}}(z)^{-1} = (A_{0,z_0}^{\orders{1}})^{-1}[I+S(z)(A_{0,z_0}^{\orders{1}})^{-1}]^{-1}=\sum_{\ell=0}^\infty (A_{0,z_0}^{\orders{1}})^{-1}(-S(z)(A_{0,z_0}^{\orders{1}})^{-1})^\ell$), which implies that $A^{\orders{1}}(z)$ is Fredholm of index zero. 

\end{remark}
}


The above proposition may be understood as a natural extension of some existing results developed in a Hilbert space setting to a Banach space setting; see e.g., Theorem 2.4 of \cite{behrndt2015} and Theorem 3.4 of \cite{Gesztesy2015}.  \cite{HOWLAND197112} obtains a similar result when \(A(z)\) satisfies an additional condition. 
Given that \(I=P_1 + Q_1\), the operator \(P_1 + (z-z_0)Q_1\) is understood as a perturbed identity near $z=z_0$.  Its inverse is simply given by \(P_1 + (z-z_0)^{-1}Q_1\), and thus \(A^{\orders{1}}(z)\) may be regarded as the quotient operator (see, e.g., \cite{kaufman1978representing,KOLIHA2014688}) obtained by dividing \eqref{fredfac} by the perturbed identity. It is also important to note that the resulting operator \commRV{\(A^{\orders{1}}(z)\)} is similar to \(A(z)\) in the sense that both are analytic Fredholm-valued functions, but differentiated in that \commRV{\(A^{\orders{1}}(z)\)} has a pole of order \(d-1\) at \(z=z_0\). This makes it possible to apply the same factorization repeatedly, for $k=1,\ldots, d$ with $d > 0$,
\begin{equation} \label{eqfreadholmq}
A^{\orders{k-1}}(z) =(P_{k}+ (z-z_0)Q_{k}) A^{\orders{k}}(z), \quad \text{with} \quad  A^{\orders{0}}(z)=A(z),
\end{equation}
until we obtain an analytic operator-valued function $A^{\orders{d}}(z)$, which is invertible at \(z=z_0\). Noting that, for each \(z\), \(A^{\orders{k}}(z)\) can be understood as a quotient operator and that the order of the pole of \(A^{\orders{k}}(z)^{-1}\) at \(z = z_0\) is reduced by \(k\) compared to that of \(A^{\orders{0}}(z)^{-1}=A(z)^{-1}\), we will refer to \(A^{\orders{k}}\) as the Fredholm quotient of order \(k\) hereafter. \revlab{r1_major5}\commRV{The repetition described by \eqref{eqfreadholmq} leads to the following generalization of Proposition \ref{propo1}.}

\begin{proposition}[Fredholm factorization]\label{propo2} Let the assumptions of Proposition \ref{propo1} hold and let \revlab{r1_major6}$\commRV{A^{\orders{0}}(z) = A(z)}$. Then for some $d>0$  
				\begin{align}
				A(z) = (P_1+ (z-z_0)Q_1) \cdots(P_{d} + (z-z_0)Q_{d})A^{\orders{d}}(z), \label{fredfac2}
				\end{align}
				where $A^{\orders{d}}(z)$ is analytic, $A^{\orders{d}}(z_0)$ is invertible, $\ran P_j = \ran A^{\orders{j-1}}$ and $P_j = I - Q_j$ for $j=1,\ldots,d$. 
				Moreover,
				\begin{align} \label{eqinclusion}
				\ran P_{1} \subset \ran P_{2} \subset \cdots \subset \ran P_{d}.
				\end{align}		
\end{proposition}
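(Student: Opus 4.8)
The plan is to produce the factorization \eqref{fredfac2} by iterating Proposition \ref{propo1}, and then to extract the nesting \eqref{eqinclusion} from the behaviour of the individual factors at $z=z_0$. Write $d$ for the order of the pole of $A(z)^{-1}$ at $z_0$ and set $A^{\orders{0}}=A$. First I would run an induction on $k=1,\dots,d$ whose hypothesis is that $A^{\orders{k-1}}$ is an analytic index-zero Fredholm-valued function, invertible on a punctured neighbourhood of $z_0$, whose inverse has a pole of order $d-(k-1)$ there. For $k\le d$ this order is positive, so $A^{\orders{k-1}}(z_0)$ is not invertible while $A^{\orders{k-1}}(z)$ is invertible nearby; the hypotheses of Proposition \ref{propo1} therefore hold, and applying it yields a projection $P_k$ with $\ran P_k=\ran A^{\orders{k-1}}(z_0)$, its complement $Q_k=I-P_k$, and an analytic index-zero Fredholm-valued $A^{\orders{k}}$ obeying \eqref{eqfreadholmq} whose inverse has a pole of order $d-k$. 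After $d$ steps the pole order is $0$, i.e. $A^{\orders{d}}(z_0)$ is invertible, which closes the recursion; composing \eqref{eqfreadholmq} for $k=1,\dots,d$ telescopes into \eqref{fredfac2} and records $\ran P_j=\ran A^{\orders{j-1}}(z_0)$ and $P_j=I-Q_j$. This portion is largely bookkeeping once Proposition \ref{propo1} is available; the only points needing care are that the index stays zero and that the pole order falls by exactly one at each stage, so the procedure halts after precisely $d$ factors.

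The substantive claim is \eqref{eqinclusion}. Because $\ran P_j=\ran A^{\orders{j-1}}(z_0)$, it amounts to $\ran A^{\orders{k-1}}(z_0)\subseteq\ran A^{\orders{k}}(z_0)$ for each $k$. The first step is to evaluate \eqref{eqfreadholmq} at $z=z_0$: the factor $P_k+(z-z_0)Q_k$ collapses to $P_k$, so $A^{\orders{k-1}}(z_0)=P_kA^{\orders{k}}(z_0)$ and hence $\ran A^{\orders{k-1}}(z_0)=P_k\big(\ran A^{\orders{k}}(z_0)\big)$. Since $\ran P_k=\ran A^{\orders{k-1}}(z_0)$, the desired inclusion is in fact \emph{equivalent} to the invariance of $\ran A^{\orders{k}}(z_0)$ under $P_k$. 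To get a handle on that invariance I would use the quotient form $A^{\orders{k}}(z)=\big(P_k+(z-z_0)^{-1}Q_k\big)A^{\orders{k-1}}(z)$: analyticity at $z_0$ forces $Q_kA^{\orders{k-1}}_{0,z_0}=0$ (consistent with $\ran P_k=\ran A^{\orders{k-1}}(z_0)$), and reading off the constant term gives the clean identity $A^{\orders{k}}(z_0)=A^{\orders{k-1}}_{0,z_0}+Q_kA^{\orders{k-1}}_{1,z_0}$, where $A^{\orders{k-1}}_{0,z_0},A^{\orders{k-1}}_{1,z_0}$ are the first two Taylor coefficients of $A^{\orders{k-1}}$ at $z_0$.

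Working in the splitting $\mathcal B=\ran P_k\oplus\ran Q_k$, in which $\ran Q_k\cong\coker A^{\orders{k-1}}(z_0)$ is finite-dimensional by the Fredholm property, the first summand above lands in $\ran P_k$ and the second in $\ran Q_k$. Tracing through this decomposition, the $P_k$-invariance of $\ran A^{\orders{k}}(z_0)$ reduces to a single finite-dimensional assertion: the correction $Q_kA^{\orders{k-1}}_{1,z_0}$ must already attain its full range when restricted to $\ker A^{\orders{k-1}}(z_0)$. I expect this surjectivity to be the main obstacle, since it is not formal: it is exactly the point at which the hypothesis that the pole order drops by precisely one has to be turned into a dimension count relating $\dim\ker A^{\orders{k-1}}(z_0)$, $\dim\ker A^{\orders{k}}(z_0)$, and $\dim\coker A^{\orders{k-1}}(z_0)$. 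The same identity yields the auxiliary chain $\ker A^{\orders{k}}(z_0)\subseteq\ker A^{\orders{k-1}}(z_0)$, which I would combine with the index-zero property carried along the recursion to match these dimensions; establishing that matching — and tracking how it interacts with the choice of each $P_k$ — is the step I would expect to spend the most effort on.
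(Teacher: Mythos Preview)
Your iteration producing \eqref{fredfac2} matches the paper's, and your diagnosis of the crux of \eqref{eqinclusion}—that $Q_kA^{\orders{k-1}}_{1,z_0}$ must attain its full range already on $\ker A^{\orders{k-1}}(z_0)$—is exactly the right reduction. The trouble is that this surjectivity is not merely ``not formal'': it can fail outright, and with it the nesting \eqref{eqinclusion}. Take $\mathcal B=\mathbb C^{2}$, $z_0=0$, and $A(z)=\bigl(\begin{smallmatrix} z & 0\\ 1 & z\end{smallmatrix}\bigr)$, so that $A(z)^{-1}$ has a pole of order $2$. Then $\ran A(0)=\spn\{e_2\}$, and every admissible $P_1$ has the form $\bigl(\begin{smallmatrix}0&0\\ a&1\end{smallmatrix}\bigr)$ for some $a\in\mathbb C$, with $Q_1=\bigl(\begin{smallmatrix}1&0\\ -a&0\end{smallmatrix}\bigr)$. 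A direct computation gives $A^{\orders{1}}(0)=A(0)+Q_1A_{1,0}=\bigl(\begin{smallmatrix}1&0\\ 1-a&0\end{smallmatrix}\bigr)$, so $\ran P_2=\ran A^{\orders{1}}(0)=\spn\{(1,1-a)^{\top}\}$; this line never contains $e_2$, hence $\ran P_1\not\subset\ran P_2$ for \emph{every} choice of $P_1$. The dimension count you were planning therefore cannot close, because the claim itself is false at this level of generality.

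For comparison, the paper's own argument for \eqref{eqinclusion} is much shorter than yours: from $A^{\orders{j-1}}(z_0)=P_jA^{\orders{j}}(z_0)$ it asserts $\ran A^{\orders{j}}(z_0)=P_j\,\ran A^{\orders{j}}(z_0)\oplus Q_j\,\ran A^{\orders{j}}(z_0)$ and reads off the inclusion. But that displayed equality is precisely the unjustified step—one always has $R\subset P_jR+Q_jR$, while the reverse containment needs $R$ to be $P_j$-invariant—and the example above shows it genuinely fails here. In short, your more careful route correctly locates the obstruction; the gap is that the obstruction is real, so the ``moreover'' clause cannot be proved as stated without an additional hypothesis linking the successive projections (for instance, a nesting of the kernels imposed by construction rather than derived after the fact).
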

Similar results to the above proposition, considering the case where \(\mathcal B\) is a Hilbert space, can be found in \cite{behrndt2015} and \cite{Gesztesy2015}.  Proposition \ref{propo2}  will be a key input to the subsequent discussion.
Note that if $A^{\orders{d}}(z_0)$ is invertible, then it is surjective. This implies that any additional factorization given by \eqref{eqfreadholmq} when applied to \eqref{fredfac2}, changes nothing and yields $A^{\orders{d}}(z)=A^{\orders{d+1}}(z_0)=\cdots$ since $P_{d+1}=P_{d+2}=\cdots=I$ (and thus $Q_{d+1}=Q_{d+2}=\cdots =0$). \revlab{r1_major7}\commRV{If $P_d \neq I$ and $P_{d+1}=I$ in Proposition \ref{propo2}, then $d$ in \eqref{fredfac2} is the (uniquely determined) order of the pole at $z=z_0$.}


\begin{remark} \label{rem01}
Note that $P_{k}$ is given by the projection onto $\ran A^{\orders{k-1}}(z_0)$ in Proposition \ref{propo2} for $k=1,\ldots,d$, and there are no other requirements; that is, $\ker P_k$ can be any arbitrarily convenient choice. This means that, in a Hilbert space setting, $P_k$ can be set as an orthogonal projection whose kernel is given by $[\ran A^{\orders{k-1}}]^\perp$.
\end{remark}

The Fredholm quotient of order $k$, $A^{\orders{k}}$, and the projection $P_{k+1}$ onto its range play a crucial role in the subsequent discussion. These can further be characterized in terms of the Taylor coefficients $\{A_{j,z_0}\}_{j\geq 0}$ of $A(z)$ via the recursive formula to be given in Proposition \eqref{propo3} below. Hereafter, for the convenience in presenting the subsequent theoretical results, we assume that \revlab{r1_major8}
\begin{equation} \label{eqpcondi}
P_0 = 0, \quad P_{d} \neq I, \quad \text{and} \quad P_{d+j} = I \quad \commRV{\text{for $j > 0$}}, 
\end{equation}  
where, as discussed above, the latter two conditions are not restrictions but should be understood as normalizations that allow us to interpret the integer $d$ appearing in Proposition \ref{propo2} as the pole order of  $A(z)^{-1}$ at $z=z_0$.

\begin{proposition}[Fredholm quotient of order $k$]\label{propo3} Let everything be as in Proposition \ref{propo2} with $d>0$, and suppose \eqref{eqpcondi} holds. Then, for $k=1,\ldots,d$, the Fredholm quotient of order $k$ is given by 
	\begin{equation}
A^{\orders{k}}(z_0) = \sum_{j=0}^{k-1}(P_{j+1}-P_j)A_{j,z_0} + (I-P_k)A_{k,z_0} \label{fredfactor}
	\end{equation}
and for all $0\leq j,\ell \leq k-1$ with $j\neq \ell$
	\begin{align} \label{eqprojec}
(P_{j+1}-P_j)(P_{\ell+1}-P_\ell) = 0 \quad \text{and} \quad (P_{j+1}-P_j)(I-P_k) = 0.
	\end{align}
\end{proposition}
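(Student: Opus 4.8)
The plan is to unroll the recursion \eqref{eqfreadholmq} and read off the constant term of the resulting Laurent product. As noted after Proposition~\ref{propo1}, the perturbed identity $P_k+(z-z_0)Q_k$ has inverse $P_k+(z-z_0)^{-1}Q_k$, so \eqref{eqfreadholmq} gives $A^{\orders{k}}(z)=(P_k+(z-z_0)^{-1}Q_k)A^{\orders{k-1}}(z)$ and hence, by iteration,
\begin{equation*}
A^{\orders{k}}(z)=\bigl(P_k+(z-z_0)^{-1}Q_k\bigr)\cdots\bigl(P_1+(z-z_0)^{-1}Q_1\bigr)A(z).
\end{equation*}
Because $A^{\orders{k}}$ is analytic by Proposition~\ref{propo2}, $A^{\orders{k}}(z_0)$ equals the coefficient of $(z-z_0)^0$ in this product; so everything reduces to computing the operator pencil on the right and pairing it against the Taylor series $A(z)=\sum_{j\ge 0}A_{j,z_0}(z-z_0)^j$.

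The core step is the closed form
\begin{equation*}
\bigl(P_k+(z-z_0)^{-1}Q_k\bigr)\cdots\bigl(P_1+(z-z_0)^{-1}Q_1\bigr)=\sum_{i=0}^{k-1}(z-z_0)^{-i}(P_{i+1}-P_i)+(z-z_0)^{-k}(I-P_k),
\end{equation*}
which I would prove by induction on $k$ (with $P_0=0$ from \eqref{eqpcondi}). The only inputs are the identities forced by the nesting \eqref{eqinclusion}: for $a\le b$ one has $P_bP_a=P_a$, whence $Q_bP_a=0$, $Q_bQ_a=Q_b$, and $P_bQ_a=P_b-P_a$. In the inductive step, left multiplication by $P_k+(z-z_0)^{-1}Q_k$ leaves each interior increment $P_{i+1}-P_i$ ($i\le k-2$) unchanged under $P_k$ and kills it under $Q_k$, while the tail $I-P_{k-1}$ is sent by $P_k$ to $P_k-P_{k-1}$ (the new $i=k-1$ increment) and by $Q_k$ to $I-P_k$ (the new tail); re-indexing reproduces the claimed expression. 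Granting this, \eqref{fredfactor} follows by inspection: the coefficient of $(z-z_0)^0$ pairs $(z-z_0)^{-i}(P_{i+1}-P_i)$ with $A_{i,z_0}(z-z_0)^i$ for $i=0,\dots,k-1$ and $(z-z_0)^{-k}(I-P_k)$ with $A_{k,z_0}(z-z_0)^k$.

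For \eqref{eqprojec} I would expand the increments directly: writing out $(P_{j+1}-P_j)(P_{\ell+1}-P_\ell)$ as its four products and $(P_{j+1}-P_j)(I-P_k)$ as its two products, each collapses by telescoping once one knows that nested projections absorb in both orders, $P_aP_b=P_{\min(a,b)}$. This two-sided absorption is the step I expect to be the main obstacle. The nesting \eqref{eqinclusion} only yields $P_bP_a=P_a$ with the larger index on the left; the reverse order $P_aP_b=P_a$ for $a<b$ is equivalent to the kernels being nested, $\ker P_1\supseteq\cdots\supseteq\ker P_d$, which is not automatic for arbitrary complements. I would secure it by choosing the complements compatibly: each $\ran P_k=\ran A^{\orders{k-1}}(z_0)$ is complemented (it is the range of an index-zero Fredholm operator), and since $\ran P_k=\ran P_{k-1}\oplus(\ran P_k\cap\ker P_{k-1})$ one can pick $\ker P_k\subseteq\ker P_{k-1}$ recursively; in a Hilbert space the orthogonal projections of Remark~\ref{rem01} satisfy this automatically. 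With the increments $P_{j+1}-P_j$ then forming a family of mutually orthogonal idempotents, both identities in \eqref{eqprojec} drop out of the elementary expansions above.
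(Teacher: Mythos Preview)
Your argument is correct and runs parallel to the paper's, but you unroll the factorization from the opposite side. The paper expands the \emph{forward} product $\prod_{j=1}^{k}(P_j+(z-z_0)Q_j)=\sum_{j=0}^{k}(P_{j+1}-P_j)(z-z_0)^j$, equates Taylor coefficients of $A(z)$ against those of the product, and then left-multiplies each resulting identity by the increment $(P_{m+1}-P_m)$ to isolate $A^{\orders{k}}_{0,z_0}$; you instead expand the \emph{inverse} product and read off the constant term of $A^{\orders{k}}(z)$ directly, which is one step shorter. The substantive difference is in which projection identities are invoked: your inverse-product induction uses only $P_bP_a=P_a$ with the larger index on the left, and this is exactly what the range nesting \eqref{eqinclusion} delivers; the paper's forward product, by contrast, rests on $P_jP_{j+\ell}=P_j$ with the smaller index on the left, which already presupposes the kernel nesting $\ker P_1\supseteq\cdots\supseteq\ker P_d$ that you flag at the end. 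In this sense your derivation of \eqref{fredfactor} is actually cleaner—it goes through for arbitrary complements—while both routes need compatibly chosen kernels for the full symmetric statement \eqref{eqprojec}. The paper is content to say \eqref{eqprojec} ``directly follows from \eqref{eqinclusion},'' but you are right that only the $j>\ell$ direction does; your recursive choice of complements (or the orthogonal choice of Remark~\ref{rem01}) supplies precisely the missing hypothesis.
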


From \eqref{fredfactor}, \eqref{eqprojec} and the facts that $\ran P_k = \ran A^{\orders{k-1}}(z_0)$ and $ \sum_{j=0}^{k-2}(P_{j+1}-P_j) + (I-P_{k-1}) = I$ (with $P_0=0$), we find that, for $k=1,\ldots,d$,
\begin{align}
\ran P_{k} &=  \sum_{j=0}^{k-2}(P_{j+1}-P_j) \ran A^{\orders{k-1}}(z_0) + (I-P_{k-1})\ran A^{\orders{k-1}}(z_0) \notag\\ &= \ran A_{0,z_0} + \ran (P_2-P_1)A_{1,z_0} + \cdots + \ran (I-P_{k-1}) A_{k-1,z_0} \label{eqadg}.
\end{align}
Moreover, noting that $\ran P_{d+1} = \ran A^{\orders{d}}(z_0)  = \mathcal B$, we also find that 
\begin{align}
\ran P_{d+1}  &=  \sum_{j=0}^{d-1}(P_{j+1}-P_j) \ran A^{\orders{d}}(z_0) + (I-P_{d})\ran A^{\orders{d}}(z_0) \notag \\
 &= \ran A_{0,z_0} + \ran(P_2-P_1)A_{1,z_0} + \cdots + \ran (I-P_{d}) A_{d,z_0} = \mathcal B. \label{eqadg2}
\end{align} 

The results given by Propositions \ref{propo1}-\ref{propo3} and \eqref{eqadg}-\eqref{eqadg2} give us a few necessary and sufficient conditions for $A(z)^{-1}$ to have a pole of order $d$ at $z=z_0$.
\begin{proposition}\label{cor1}
Let everything be as in Proposition \ref{propo2} with $d>0$, and suppose \eqref{eqpcondi} holds. The following are equivalent conditions:
\begin{enumerate}[label=(\roman*)]
\item \label{cor1a}$A(z)^{-1}$ has a pole of order $d$ at $z=z_0$
\item\label{cor1b} $A^{\orders{d}}(z_0)$ is invertible while $A^{\orders{d-1}}(z_0)$ is not.
\item \label{cor1c} \commRV{$\mathcal B = \sum_{j=0}^d \ran (P_{j+1}-P_{j}) A_{j,z_0}$  with $P_{d}\neq I$ and  $P_{d+1} = I$.}
\item \label{cor1d} $\{0\} = \cap_{j=0}^d \ker  (P_{j+1}-P_{j}) A_{j,z_0}$ with \commRV{$P_{d}\neq I$ and  $P_{d+1} = I$}. 
\end{enumerate} 
\end{proposition}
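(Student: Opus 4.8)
The plan is to derive all four equivalences from a single structural observation: the Fredholm quotient $A^{[d]}(z_0)$ splits over a complete system of idempotents, and, being index-zero Fredholm (by Proposition \ref{propo1}), it is invertible exactly when it is injective and exactly when it is surjective. First I would record two facts. Writing $E_j := P_{j+1}-P_j$ for $0 \leq j \leq d$ (so that $E_d = I - P_d$, since $P_{d+1}=I$ under \eqref{eqpcondi}), relation \eqref{eqprojec} gives $E_j E_\ell = 0$ for $j \neq \ell$, and the telescoping identity $\sum_{j=0}^d E_j = P_{d+1}-P_0 = I$ then forces each $E_j$ to be idempotent; hence $\{E_j\}$ is a complete orthogonal system of idempotents and $\mathcal B = \bigoplus_{j=0}^d \ran E_j$. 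Second, formula \eqref{fredfactor} reads $A^{[d]}(z_0) = \sum_{j=0}^d E_j A_{j,z_0}$, and left-multiplication by $E_k$ gives $E_k A^{[d]}(z_0) = E_k A_{k,z_0}$, so that both the kernel and the range of $A^{[d]}(z_0)$ split along the decomposition $\mathcal B = \bigoplus_j \ran E_j$.

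For the equivalence of \eqref{cor1a} and \eqref{cor1b}, let $m$ denote the (finite) order of the pole of $A(z)^{-1}$ at $z_0$ and iterate Proposition \ref{propo1}: each factorization lowers the pole order of the inverse by exactly one, so $A^{[k]}(z)^{-1}$ has a pole of order $m-k$ for $0 \leq k \leq m$. Consequently $A^{[d]}(z_0)$ is invertible iff $m \leq d$, while $A^{[d-1]}(z_0)$ is non-invertible iff $m \geq d$; their conjunction, which is \eqref{cor1b}, holds iff $m = d$, i.e. iff \eqref{cor1a}. Moreover, since $\ran P_d = \ran A^{[d-1]}(z_0)$ and every quotient is index-zero Fredholm, $A^{[d-1]}(z_0)$ fails to be invertible precisely when $\ran P_d \neq \mathcal B$, that is, precisely when $P_d \neq I$; this identifies the non-invertibility clause of \eqref{cor1b} with the normalization clause $P_d \neq I$ common to \eqref{cor1c} and \eqref{cor1d}.

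The heart of the argument is feeding $A^{[d]}(z_0) = \sum_{j=0}^d E_j A_{j,z_0}$ into the injective/surjective dichotomy. For the kernel, since $E_j A_{j,z_0} x \in \ran E_j$ and the subspaces $\ran E_j$ are in direct sum, $A^{[d]}(z_0) x = 0$ holds iff every summand vanishes; thus $\ker A^{[d]}(z_0) = \bigcap_{j=0}^d \ker (P_{j+1}-P_j) A_{j,z_0}$, and injectivity—equivalently invertibility—of $A^{[d]}(z_0)$ is exactly the triviality of this intersection, which together with $P_d \neq I$ is \eqref{cor1d}. For the range, invertibility is equivalent to $\ran A^{[d]}(z_0) = \mathcal B$; applying the idempotents and invoking the telescoping computation already performed in \eqref{eqadg2} rewrites this as $\mathcal B = \sum_{j=0}^d \ran (P_{j+1}-P_j) A_{j,z_0}$, which together with $P_d \neq I$ is \eqref{cor1c}. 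Taking \eqref{cor1b} as the pivot, this yields \eqref{cor1a} $\Leftrightarrow$ \eqref{cor1b} $\Leftrightarrow$ \eqref{cor1c} and \eqref{cor1b} $\Leftrightarrow$ \eqref{cor1d}.

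The step I expect to require the most care is the range identity behind \eqref{cor1c}. The inclusion $\ran A^{[d]}(z_0) \subseteq \sum_j \ran (P_{j+1}-P_j) A_{j,z_0}$ is immediate from $E_k A^{[d]}(z_0) = E_k A_{k,z_0}$, but the reverse inclusion is not a formal consequence of this diagonalization alone: for a generic operator of the form $\sum_j E_j A_j$, the componentwise range condition $\sum_j \ran E_j A_j = \mathcal B$ is strictly weaker than surjectivity. What rescues the equivalence here is that \eqref{eqpcondi} imposes $P_{d+1}=I$, whence $\ran A^{[d]}(z_0) = \ran P_{d+1} = \mathcal B$—precisely the surjectivity that \eqref{eqadg2} converts into the componentwise statement. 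I would therefore route the range direction through \eqref{eqadg2} rather than attempting a direct decomposition, and lean on the index-zero property throughout, so that \eqref{cor1c} and \eqref{cor1d} are always reached from the common pivot \eqref{cor1b} and never require passing directly between the surjectivity and injectivity forms of the condition.
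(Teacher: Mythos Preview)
Your proof is correct and follows essentially the same route as the paper: both arguments rest on the complete idempotent system $E_j = P_{j+1}-P_j$, the identity $A^{[d]}(z_0)=\sum_j E_j A_{j,z_0}$ from Proposition~\ref{propo3}, and the index-zero Fredholm property to pass between injectivity, surjectivity, and invertibility. Your treatment is in fact somewhat more careful than the paper's in two respects: you make the pole-order bookkeeping for \eqref{cor1a}$\Leftrightarrow$\eqref{cor1b} explicit via iteration of Proposition~\ref{propo1}, and you correctly flag that the implication from the componentwise range condition \eqref{cor1c} back to surjectivity of $A^{[d]}(z_0)$ is not a purely formal consequence of the diagonalization (the paper asserts the equivalence \eqref{eqrequired1}$\Leftrightarrow$\eqref{cor1c} without comment), resolving it instead through \eqref{eqadg2} and the standing hypothesis \eqref{eqpcondi}.
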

We first note that, in the case where $\mathcal B$ is a Hilbert space, the condition (\ref{cor1d}) in Proposition \ref{cor1} can be equivalently written as the following direct sum condition:
\begin{equation} \label{eq001a}
\mathcal B = \sum_{j=0}^d [\ker (P_{j+1}-P_j)A_{j,z_0}]^{\perp}.
\end{equation}
In the literature on time series analysis, when $z_0 = 1$, the conditions given in Proposition \ref{cor1} are often referred to as the I($d$) condition (see, e.g., \cite{johansen2008}), which guarantees that $X_t$, satisfying \eqref{eq001} with the characteristic \commRV{series}\revlab{r1_major3b} equivalent to $A(z)$ in \eqref{eq01} (meaning that the Taylor series of the characteristic \commRV{series}\revlab{r1_major3c} around $z=z_0$ is given by \eqref{eq01}), contains a $d$-th order integrated component $\tilde{\varepsilon}_{d,t} = \sum_{s=1}^{t} \tilde{\varepsilon}_{d-1,s}$ with $\tilde{\varepsilon}_{0,t}=\varepsilon_t$. The conditions (\ref{cor1c}) and (\ref{cor1d}) with $z_0 = 1$ are particularly interesting when compared to the existing characterizations of the I($d$) condition in a potentially infinite-dimensional setting. \cite{Franchi2017b} provide a certain direct sum condition for $X_t$ to contain a $d$-th order integrated component in a Hilbert space setting. \cite{BS2018} provide similar direct sum conditions for $d = 1$ and $2$. These existing conditions, in fact, involve Moore-Penrose inverses of various operators depending on $A_{j,z_0}$ and certain orthogonal projections, and thus are expressed in quite complicated forms unless $d = 1$. However, our direct sum conditions given in (\ref{cor1c}) and \eqref{eq001a} are written in terms of the Fredholm quotients, depending only on $\{A_{j,z_0}\}_{j=0}^d$ and $\{P_j\}_{j=0}^{d+1}$ (furthermore, from Proposition \ref{propo3} and \eqref{eqadg}, $P_j$ can be characterized in terms of $A_{j,z_0}$). Importantly, this formulation applies in a straightforward manner to any arbitrary order $d$, thereby highlighting both its simplicity and generality.  It should also be noted that our conditions in Proposition \ref{cor1} are developed under a more general Banach space setting. To compare the differences between ours and the recent results, see \cite{BS2018} \cite{Franchi2017b}, and \cite{seo_2023_fred}.

\section{Closed-form expression}\label{sec_closed}
In this section, we assume that $A(z)^{-1}$ has a pole of order $d$ at $z=z_0$, and then derive a systematic and complete way to express $A(z)^{-1}$ in terms of $\{A_{j,z_0}\}_{j\geq 0}$ and $\{P_j\}_{j=0}^{d+1}$ (with $P_{d+1}=I$ and $P_0=0$) using the Fredholm quotient $A^{\orders{d}}(z)$. We let 
\begin{equation} \label{eqexpanG}
A^{\orders{d}}(z) = \sum_{j=0}^\infty G_{j,z_0} (z-z_0)^{j}
\end{equation}
and 
\begin{equation}  \label{eqexpanH}
(A^{\orders{d}}(z))^{-1}  = \sum_{j=0}^\infty H_{j,z_0} (z-z_0)^{j}.
\end{equation}
The operators $G_{j,z_0}$ in \eqref{eqexpanG} and $H_{j,z_0}$ in \eqref{eqexpanH} can be explicitly expressed as a function of $\{P_j\}_{j=0}^{d+1}$ and $\{A_{j,z_0}\}_{j\geq 0}$ as follows:
\begin{proposition}\label{propo4}
Let everything be as in Proposition \ref{propo2} with $d>0$, and suppose \eqref{eqpcondi} holds. Then for all $k \geq 0$,
	\begin{align}
	G_{k,z_0}&= \sum_{j=0}^{d}(P_{j+1}-P_{j})A_{k+j,z_0}, \quad j \geq 0, \notag \\ 
H_{j,z_0} &= \begin{cases}
G_{0,z_0}^{-1} \quad &\text{if $j=0$}, \\
- G_{0,z_0}^{-1} \sum_{k=1}^j G_{k,z_0}H_{j-k,z_0}, \quad &\text{if $j\geq1$}.  \label{eqpropo}
\end{cases}
	\end{align}
\end{proposition}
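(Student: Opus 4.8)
The plan is to compute the $G_{k,z_0}$ by explicitly inverting the perturbed-identity factors in the Fredholm factorization \eqref{fredfac2}, and then to obtain the $H_{j,z_0}$ by a Cauchy-product argument. First I would invert the recursive factorization \eqref{eqfreadholmq}: since $P_k$ and $Q_k = I - P_k$ are complementary projections, the factor $P_k + (z-z_0)Q_k$ has inverse $P_k + (z-z_0)^{-1}Q_k$, so \eqref{eqfreadholmq} rearranges to $A^{\orders{k}}(z) = (P_k + (z-z_0)^{-1}Q_k)A^{\orders{k-1}}(z)$. Iterating from $A^{\orders{0}} = A$ then yields
\[
A^{\orders{d}}(z) = \Pi(z)\,A(z), \qquad \Pi(z) := \bigl(P_d + (z-z_0)^{-1}Q_d\bigr)\cdots\bigl(P_1 + (z-z_0)^{-1}Q_1\bigr).
\]

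The heart of the argument is the closed form $\Pi(z) = \sum_{j=0}^{d}(z-z_0)^{-j}(P_{j+1}-P_j)$, which I would establish by induction on the number of factors, using the intermediate form $\Pi_m(z) = \sum_{j=0}^{m-1}(z-z_0)^{-j}(P_{j+1}-P_j) + (z-z_0)^{-m}(I-P_m)$ for the product of the top $m$ factors. The induction step relies only on the nested ranges \eqref{eqinclusion} (together with the normalizations $P_0 = 0$ and $P_{d+1}=I$): the inclusion $\ran P_j \subset \ran P_m$ for $j \le m$ forces $P_m P_j = P_j$, and hence $Q_m P_j = (I-P_m)P_j = 0$. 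These two identities are exactly what is needed for each new factor $P_m + (z-z_0)^{-1}Q_m$ to collapse the partial product into the next instance of the claimed form, with the telescoping sum $\sum_{j=0}^d (P_{j+1}-P_j) = I$ bookkeeping the result.

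With $\Pi(z)$ in hand, I would substitute the Taylor series $A(z) = \sum_{m \ge 0} A_{m,z_0}(z-z_0)^m$ into $A^{\orders{d}}(z) = \Pi(z)A(z)$ and read off the coefficient of $(z-z_0)^k$. Matching $m - j = k$, i.e.\ $m = k+j$, produces precisely $G_{k,z_0} = \sum_{j=0}^{d}(P_{j+1}-P_j)A_{k+j,z_0}$ for every $k \ge 0$. The potential negative-power terms (those with $m < j$) must vanish because $A^{\orders{d}}(z)$ is analytic by Proposition \ref{propo2}; they therefore impose no constraint on the nonnegative coefficients and can be disregarded. For the $H_{j,z_0}$, I note that since $A^{\orders{d}}(z_0) = G_{0,z_0}$ is invertible, $(A^{\orders{d}}(z))^{-1}$ is analytic at $z_0$ with the Taylor expansion \eqref{eqexpanH}. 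Expanding the identity $A^{\orders{d}}(z)(A^{\orders{d}}(z))^{-1} = I$ as a Cauchy product and equating coefficients of $(z-z_0)^n$ gives $G_{0,z_0}H_{0,z_0} = I$ for $n = 0$, whence $H_{0,z_0} = G_{0,z_0}^{-1}$, and $\sum_{k=0}^{n} G_{k,z_0}H_{n-k,z_0} = 0$ for $n \ge 1$; solving the latter for $H_{n,z_0}$ yields the stated recursion.

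I expect the main obstacle to be the induction establishing the closed form for $\Pi(z)$ — in particular, keeping the bookkeeping of the collapsing projections correct under repeated multiplication and confirming that the analyticity of $A^{\orders{d}}$ (rather than some separate computation) is what guarantees the cancellation of negative powers. By contrast, the remaining Cauchy-product step for the $H_{j,z_0}$ is routine coefficient comparison.
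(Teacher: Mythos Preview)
Your proposal is correct. The route differs mildly from the paper's: the paper works with the \emph{forward} factorization $A(z)=\bigl[\sum_{j=0}^{d}(P_{j+1}-P_j)(z-z_0)^{j}\bigr]A^{\orders{d}}(z)$ (the closed form for the product of perturbed identities having been established in the proof of Proposition~\ref{propo3}), equates coefficients to express each $A_{m,z_0}$ as a sum of terms $(P_{\cdot}-P_{\cdot})G_{\cdot,z_0}$, and then applies $(P_{j+1}-P_j)$ on the left and uses the mutual annihilation $(P_{j+1}-P_j)(P_{\ell+1}-P_\ell)=0$ to isolate $(P_{j+1}-P_j)G_{k,z_0}$ one piece at a time before summing over $j$. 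You instead invert the perturbed-identity product first to obtain $A^{\orders{d}}(z)=\Pi(z)A(z)$ with $\Pi(z)=\sum_{j=0}^{d}(z-z_0)^{-j}(P_{j+1}-P_j)$ (the same closed form the paper derives, in the reciprocal direction, in its proof of Proposition~\ref{propo5}), and then read $G_{k,z_0}$ off directly from the Cauchy product, invoking the analyticity of $A^{\orders{d}}$ from Proposition~\ref{propo2} to discard the apparent negative-power terms. Your path is slightly more economical in that it bypasses the piece-by-piece extraction, at the cost of the extra (but easy) observation that the negative powers cancel; the paper's path never leaves the realm of analytic series and so needs no such appeal. The $H_{j,z_0}$ recursion via the Cauchy product of $A^{\orders{d}}(z)(A^{\orders{d}}(z))^{-1}=I$ is exactly what the paper does as well.
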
\label{propo6}
Combined with the results given in Proposition \ref{propo4}, the following result provides a closed-form representation of $A(z)^{-1}$: 
\begin{proposition}\label{propo5} Let everything be as in Proposition \ref{propo2} with $d>0$, and suppose \eqref{eqpcondi} holds. Then the inverse $A(z)^{-1}$ admits the Laurent series
	\begin{equation}
	A(z)^{-1}= \sum_{j=-d}^{\infty} \Psi_j (z-z_0)^{j},
	\end{equation}
where 
	\begin{equation}
\Psi_j = \begin{cases}
\sum_{\ell=0}^{j+d}H_{j+d-\ell,z_0} (P_{d+1-\ell}-P_{d-\ell}), & \text{if $j <0$},\\
\sum_{\ell=0}^{d} H_{j+\ell,z_0} (P_{\ell+1}-P_{\ell}), & \text{if $j \geq0$}.  
\end{cases} \notag
	\end{equation}
\end{proposition}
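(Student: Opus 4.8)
The plan is to invert the Fredholm factorization \eqref{fredfac2} directly, reducing the computation to multiplying the (already known) Taylor series of $A^{\orders{d}}(z)^{-1}$ against an explicit finite Laurent polynomial in $(z-z_0)^{-1}$. Writing $M_k(z)=P_k+(z-z_0)Q_k$, the complementarity relations $P_k^2=P_k$, $Q_k^2=Q_k$ and $P_kQ_k=Q_kP_k=0$ give at once that $M_k(z)^{-1}=P_k+(z-z_0)^{-1}Q_k$ on a punctured neighbourhood of $z_0$. Hence from \eqref{fredfac2},
\begin{equation*}
A(z)^{-1}=A^{\orders{d}}(z)^{-1}\,(P_d+(z-z_0)^{-1}Q_d)\cdots(P_1+(z-z_0)^{-1}Q_1),
\end{equation*}
so that, setting $w=(z-z_0)^{-1}$, everything reduces to evaluating the ordered product $\Pi_d:=(P_d+wQ_d)\cdots(P_1+wQ_1)$, with the higher-indexed factors on the left.

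The heart of the argument is the claim that, for $m=1,\dots,d$,
\begin{equation*}
\Pi_m:=(P_m+wQ_m)\cdots(P_1+wQ_1)=\sum_{\ell=0}^{m-1}w^{\ell}(P_{\ell+1}-P_{\ell})+w^{m}(I-P_m),
\end{equation*}
which for $m=d$ collapses to $\Pi_d=\sum_{\ell=0}^{d}w^{\ell}(P_{\ell+1}-P_\ell)$ after invoking the normalizations $P_0=0$ and $P_{d+1}=I$ in \eqref{eqpcondi}. I would prove this by induction on $m$. The base case $m=1$ is immediate. For the inductive step I would left-multiply the hypothesis for $\Pi_{m-1}$ by $P_m+wQ_m$ and use the single structural input that, by the nesting \eqref{eqinclusion}, $\ran P_\ell\subset\ran P_m$ for $\ell\le m-1$; since $P_m$ fixes its own range this yields $P_mP_\ell=P_\ell$, hence $P_m(P_{\ell+1}-P_\ell)=P_{\ell+1}-P_\ell$ and $Q_m(P_{\ell+1}-P_\ell)=0$ for $\ell\le m-2$, while $P_m(I-P_{m-1})=P_m-P_{m-1}$ and $Q_m(I-P_{m-1})=I-P_m$. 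Collecting terms telescopes to the stated formula for $\Pi_m$.

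The subtlety here, which I expect to be the main obstacle to state cleanly, is that only the relations $P_mP_\ell=P_\ell$ (left multiplication by the higher-indexed projection) are ever needed, and these are exactly what the range inclusions supply; the reverse identities $P_\ell P_m=P_\ell$ need not hold in a Banach space where the $P_k$ are not assumed to commute. It is precisely because the factors appear in decreasing order of index — which is forced by inverting \eqref{fredfac2} — that only left-multiplication by higher-indexed projections occurs, so the induction goes through without any commutativity hypothesis.

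Finally, substituting $w=(z-z_0)^{-1}$ and using the expansion \eqref{eqexpanH} with the coefficients $H_{j,z_0}$ from Proposition \ref{propo4}, I would form the Cauchy product
\begin{equation*}
A(z)^{-1}=\Big(\sum_{j\ge 0}H_{j,z_0}(z-z_0)^{j}\Big)\Big(\sum_{\ell=0}^{d}(z-z_0)^{-\ell}(P_{\ell+1}-P_\ell)\Big),
\end{equation*}
which converges on the same punctured neighbourhood since the second factor is a finite sum. The term $H_{j,z_0}(P_{\ell+1}-P_\ell)$ contributes to the power $(z-z_0)^{j-\ell}$, so the coefficient of $(z-z_0)^{n}$ is $\Psi_n=\sum H_{n+\ell,z_0}(P_{\ell+1}-P_\ell)$ taken over those $0\le\ell\le d$ with $n+\ell\ge 0$. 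For $n\ge 0$ every $\ell\in\{0,\dots,d\}$ is admissible, giving the stated second case; for $n<0$ admissibility forces $\ell\ge -n$, and the reindexing $\ell=d-\ell'$ turns the sum into $\sum_{\ell'=0}^{n+d}H_{n+d-\ell',z_0}(P_{d+1-\ell'}-P_{d-\ell'})$, the stated first case. The lowest surviving power is $n=-d$, with $\Psi_{-d}=H_{0,z_0}(I-P_d)=G_{0,z_0}^{-1}Q_d\neq 0$ because $P_d\neq I$ and $G_{0,z_0}$ is invertible, confirming that the series begins at $-d$ and is consistent with the pole order $d$.
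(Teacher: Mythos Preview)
Your proof is correct and follows essentially the same route as the paper's: invert the factorization \eqref{fredfac2}, collapse the ordered product $(P_d+(z-z_0)^{-1}Q_d)\cdots(P_1+(z-z_0)^{-1}Q_1)$ to $\sum_{\ell=0}^{d}(z-z_0)^{-\ell}(P_{\ell+1}-P_\ell)$ via the range-nesting identities, and then read off the Laurent coefficients from the Cauchy product with \eqref{eqexpanH}. Your version is somewhat more explicit than the paper's---you spell out the induction on $m$, isolate that only the identities $P_mP_\ell=P_\ell$ for $\ell\le m$ are needed (and explain why the factor ordering ensures this suffices), and verify $\Psi_{-d}\neq 0$---but the underlying argument is the same.
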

\begin{remark} \label{remrevisionadd}
\commRV{
The following insightful observation is entirely due to an anonymous referee, to whom the author is indebted for suggesting an alternative approach to inverting $A(z)$. This approach is based on a closed-form inversion of linear or polynomial operator-valued functions around an isolated singularity, as developed by \cite{ALBRECHT2014411}.
Let $P_d(z) = \prod_{j=1}^d [P_j + (z-z_0)Q_j] A^{\orders{d}}(z_0)$. Following Proposition \ref{propo2}, we can decompose $A(z)$ as $A(z) = P_d(z) + S_d(z)$, where $S_d(z)=\prod_{j=1}^d [P_j + (z-z_0)Q_j][A^{\orders{d}}(z)-A^{\orders{d}}(z_0)]$ which is analytic in a neighborhood of $z = z_0$. Since $A(z)$ and $P_d(z)$ are invertible in a neighborhood of $z=z_0$ (excluding $z=z_0$), the Neumann expansion yields
\begin{equation}
A(z)^{-1} = P_d(z)^{-1}(I+S_d(z)P_d(z)^{-1})^{-1} =  \sum_{\ell=0}^\infty P_d(z)^{-1} (-S_d(z)P_d(z)^{-1})^{\ell}
\end{equation}
for all $z\neq z_0$ with $|z-z_0|$ sufficiently small. As pointed out by the referee, $P_d(z)$ is a polynomial operator-valued function, and a closed-form expression of its inverse can be obtained by the inversion results of \cite{ALBRECHT2014411}, which can be used more generally for polynomial operator-valued functions without requiring the Fredholm property; see Section 7 of \cite{ALBRECHT2014411}. 
}
\end{remark}

%
\section{Application} \label{grtfred}
In this section, we characterize solutions to the AR law of motion \eqref{eq001} with the characteristic \commRV{series}\revlab{r1_major3d} equivalent to $A(z)$ in \eqref{eq01}, using our theoretical results. The cases  $d=1$ or $d=2$ are regarded as empirically relevant in the time series literature, and we focus on these as applications of our theoretical framework. However, since our characterization of the solutions follows directly from the results in Sections \ref{sec_facto} and \ref{sec_closed}, the representation of $X_t$ for $d\geq 3$ requires only a minor modification. 
We hereafter let $\pi_j(k)$ be defined as follows: $\pi_0(k)=1,$ $\pi_1(k)=k$, and $\pi_j(k)=k(k-1)\cdots (k-j+1)/j!$ for $j \geq 2$. The following results show the desired characterization.
\begin{proposition}\label{propapplication}
Assume that the AR law of motion \eqref{eq001} holds with the characteristic \commRV{series}\revlab{r1_major3e} equivalent to $A(z)$ in \eqref{eq01},  which is further assumed to be invertible for any $z$ satisfying $|z|\leq 1 + \eta$ for some $\eta>0$, except at $z=1$. Then the following hold: 
\begin{enumerate}[label=(\roman*)]
\item \label{propapplicationa} If  $A^{\orders{1}}(1) = A_{0,1} - (I-P_1)A_{1,1}$ is invertible, then $d=1$ and $X_t$ can be represented as follows:
	\begin{equation}\label{eqi1repre}
	X_t = \tau_0 + \Psi_{-1} \sum_{s=1}^t \varepsilon_s + \nu_t, \quad t\geq 1,
	\end{equation}
where $\nu_t = \sum_{j=0}^\infty \Phi_j \varepsilon_{t-j}$, $\Phi_j = \sum_{k=j}^\infty (-1)^{k-j} \pi_j(k)\Psi_k$, and $\Psi_j$ is determined as in  Proposition \ref{propo5} with $d=1$.  
\item \label{propapplicationb} If $A^{\orders{2}}(1)  =  A_{0,1}+(P_2-P_1)A_{1,1}+(I-P_2)A_{2,1}$ is invertible while $A^{\orders{1}}(1)$ is not, then $d=2$ and $X_t$ can be represented as follows: for some $\tau_0, \tau_1 \in \mathcal B$,
	\begin{equation*}
	X_t =  \tau_0 + \tau_1 t + \Psi_{-2}\sum_{r=1}^s \sum_{s=1}^t \varepsilon_s + \Psi_{-1}\sum_{s=1}^t \varepsilon_s + \nu_t,
	\end{equation*}
	where $\nu_t = \sum_{j=0}^\infty \Phi_j \varepsilon_{t-j}$, $\Phi_j = \sum_{k=j}^\infty (-1)^{k-j} \pi_j(k)\Psi_k$, and $\Psi_j$ is determined as in  Proposition \ref{propo5} with $d=2$. 
\end{enumerate}
\end{proposition}
Even though it is not the main focus of this paper, it can be shown that $\|\Phi_j\|_{\mathcal L_{\mathcal B}}$ decreases exponentially as $j$ increases, under the assumptions of Proposition \ref{propapplication}; see, e.g., the proofs of Propositions 3.1 and 4.1 in \cite{seo_2023_fred}. The characterization of solutions to the AR law of motion \eqref{eq001}, as in Proposition \ref{propapplication}, is a central problem in the time series analysis literature. As discussed in Section \ref{sec_motive}, the most well-known results in this context are the Granger-Johansen representation theorems. In this regard, Proposition \ref{propapplication} may be viewed as a version of the Granger-Johansen representation theorem, derived from our previous Fredholm factorization and the properties of the associated Fredholm quotients, in a more general setup.

\revlab{r1_major9}\commRV{
\begin{remark}\label{remadd3}
The case $d=1$ in Proposition \ref{grtfred} has received significant attention in the time series literature (see, e.g., \cite{NSS, seo2020functional, seo2019cointegrated,seoshang25}), particularly concerning unit-root-type nonstationarity; this form of nonstationarity may also be considered a reasonable approximation for the persistence of curve-valued time series (see, e.g., \cite{seoshang22}).  In this setting, Propositions \ref{propo5} and \ref{propapplication} imply that the random walk component takes values in the space $\ran \Psi_{-1} = \ran G_{0,1}^{-1}(I-P_1)$. Given that $G_{0,1} = P_1 A_{0,1} + (I-P_1)A_{1,1}$, it can be shown that $\Psi_{-1} = G_{0,1}^{-1}(I-P_1)$ is in fact an extension of the inverse of the operator $P_1 A_{0,1}(I-P_1) : \ran (I-P_1) \to \ran P_1$. Since $P_1$ is the projection onto $\ran A_{0,1}$, this finding is consistent with existing representation results in Hilbert spaces \cite{BS2018}. As previously discussed, both this result and Proposition \ref{grtfred} can be viewed as special cases of a more general representation framework for any $d \in \mathbb{N}\cup\{\infty\}$ in a Banach space setting (as a straightforward extension of this, it can also be shown that our closed form expression when $d=1$, at any isolated singularity, is a special case of the inversion theorem provided by \cite{albrecht2021resolution}) 
 However, for the more complex cases where $d > 1$, a direct comparison between the results in Proposition \ref{propapplication} and the general representation theorems in \cite{albrecht2021resolution} is nontrivial. This difficulty arises primarily from the differing methodologies used to characterize the coefficient operators, specifically $\Psi_{-2}$ and $\Psi_{-1}$. Because the primary objective of this study is to illustrate an alternative approach for obtaining a closed-form inverse of an analytic Fredholm operator-valued function, a detailed investigation on these representation theorems is beyond the scope of the present work and is left for future research. 
\end{remark}}

\commRV{
\section{Conclusion}
This paper investigates the inversion of analytic Fredholm operator-valued functions by characterizing their associated Fredholm quotients, providing an alternative approach for obtaining a closed-form inverse. The primary contribution of this work lies in the simplicity and clarity of the proposed decomposition methodology (via Fredholm quotients), which provides an alternative closed-form expression for the inverse. While there have been fundamental contributions regarding the inversion of linear or polynomial operator-valued functions under broader assumptions within a Banach space setting (see, e.g., \cite{ALBRECHT2014411, albrecht2019fundamental, ALBRECHT202033, albrecht2011necessary, albrecht2021resolution}), to the best of the author's knowledge, the approach presented here has not been previously explored. Our methodology offers a distinct advantage through its transparent decomposition of an operator-valued function via Fredholm quotients.
}

%


\begin{appendices}
\section{Proofs}
\begin{proof}[Proof of Proposition \ref{propo1}]
Since $P_1$ is an orthogonal projection and $Q_1=I-P_1$, the perturbed identity $P_1 + (z-z_0) Q_1$ is invertible for $z \in U \setminus \{z_0\}$, where $U$ is an open and connected set as introduced in Section \ref{intro}, and its inverse is given by 
\begin{align}\label{eqpf00}
[P_1 + (z-z_0) Q_1]^{-1} = P_1 + (z-z_0)^{-1} Q_1.
\end{align}
Let $A^{\orders{1}}(z)$ be an operator-valued function that is defined as $A^{\orders{1}}(z) = [P_1 + (z-z_0) Q_1]^{-1}A(z) =  P_1A(z) + (z-z_0)^{-1}Q_1A(z)$ for $z\in U\setminus\{z_0\}$ and $A^{\orders{1}}(z_0) = P_1A(z_0) + Q_1A_{1,z_0}(z_0)$. Then $A^{\orders{1}}(z)$ is \commRV{analytic} at $z \in U \setminus \{z_0\}$ and also at $z=z_0$ since
 \begin{align}
\frac{A^{\orders{1}}(z)-A^{\orders{1}}(z_0)}{z-z_0}&= P_1\left(\frac{A(z)-A(z_0)}{z-z_0}\right) + Q_1\left(\frac{A(z)-A(z_0)}{(z-z_0)^2} - \frac{A_{1,z_0}(z_0)}{z-z_0} \right),  \label{eqpf01}
\end{align}
where we used the fact that $Q_1A(z)=Q_1(A(z)-A(z_0))$ since $Q_1A(z_0)=0$. Note that the right hand side  of \eqref{eqpf01} converges to $P_1 A_{1,z_0} + Q_1 A_{2,z_0}$ as $z\to z_0$. We next show that $A^{\orders{1}}(z)$ is an index-zero Fredholm operator for all $z\in U$. Note that  
\begin{equation*}
A^{\orders{1}}(z) =  A(z) + K(z), 
\end{equation*}
where 
\begin{equation*}
K(z) = \begin{cases} - Q_1 A(z) + (z-z_0)^{-1} Q_1 A(z)& \quad \text{if $z\neq z_0$},\\
  Q_1A_{1,z_0}(z_0) & \quad \text{if $z= z_0$}.
\end{cases}
\end{equation*}
Since $Q_1$ is a finite dimensional projection due to the Fredholm property of $A(z)$, $K(z)$ is obviously compact for all $z \in U$. This implies that $A^{\orders{1}}(z)$ and $A(z)$ are Fredholm operators of the same index (see e.g., Theorem 3.11 of \cite{Conway1994}).  

We next show that  $A^{\orders{1}}(z)^{-1}$ has a pole of order $d-1$ at $z=z_0$. Note that 
\begin{equation*}
A(z)^{-1} = A^{\orders{1}}(z)^{-1}[P_1 + (z-z_0)^{-1} Q_1],
\end{equation*}
which implies that $A^{\orders{1}}(z)^{-1}$ has a pole of order at most $d-1$ at $z=z_0$.
Moreover, from \eqref{eqpf00} and the fact that $A^{\orders{1}}(z) = [P_1 + (z-z_0) Q_1]^{-1}A(z)$ for any $z \in U\, \setminus\, \{z_0\}$, we have 
\begin{align}
A^{\orders{1}}(z)^{-1} = A(z)^{-1}P_1  + (z-z_0) A(z)^{-1} Q_1. \label{lemeq001}
\end{align}
Note that the second term in \eqref{lemeq001} has a pole of order $d-1$ at $z=z_0$. Moreover, observe that, for any $x \in \mathcal B$, there exists $y$ such that $P_1x=A(z_0)y$. We then note that 
\begin{align}
|z-&z_0|^{d-1}\|A^{-1}(z)P_1x\|  = |z-z_0|^{d-1}\|A^{-1}(z)A(z_0)y\| \notag \\ & \leq  |z-z_0|^{d-1}\|y\| + \left\|(z-z_0)^{d}A^{-1}(z)\left(\frac{A(z_0)y - A(z)y}{z-z_0}\right)\right\|.    \label{lemeq002}
\end{align}
As $z$ approaches $z_0$, the right-hand side of \eqref{lemeq002} is convergent. That is, \eqref{lemeq002} is bounded for $z$ close enough $z_0$. From the uniform boundedness property of a convergent sequence see, e.g.\ \citep[pp.\ 150-151]{Kato1995}, it is deduced from \eqref{lemeq002} that $|z-z_0|^{d-1}\|A^{-1}(z)P_1\|_{\mathcal L_{\mathcal B}} $ is uniformly bounded. This implies that  $A^{-1}(z)P_1$ has a pole of order at most $d-1$. Combining this result with the fact that the second term in \eqref{lemeq001} has a pole of order $d-1$ at $z=z_0$, we conclude that $A^{\orders{1}}(z)^{-1}$ has a pole of order $d-1$ at $z=z_0$. 
\end{proof}
\begin{proof}[Proof of Proposition \ref{propo2}]
	By repeatedly applying Proposition \ref{propo1}, we may obtain the following:  for $j=1,\ldots,d$,
	\begin{equation*}  
	A^{\orders{j-1}}(z) = [P_{j} + (z-z_0)Q_{j}]A^{\orders{j}}(z),
	\end{equation*}
	where $A^{\orders{0}}(z) \coloneqq A(z)$, the existence of \(d<\infty\) is guaranteed by the analytic Fredholm theorem (e.g., Corollary 8.4 in \cite{Gohberg2013}), and $A^{\orders{d}}(z)$ is holomorphic on $U$ and does not have a singularity at $z=z_0$. From Proposition \ref{propo1}, we can choose $P_j$ as the projection on $\ran A^{{\orders{j-1}}}(z_0)$ for $j=1,\ldots,d$. At $z=z_0$, we have 
\begin{equation*}
A^{{\orders{j-1}}}(z_0) = P_{j} A^{\orders{j}}(z_0), \quad j = 1,\ldots,d. 
\end{equation*}
Since $P_j$ is a projection, we have
\begin{align*}
\ran A^{{\orders{j}}}(z_0) &= P_{j} \ran A^{\orders{j}}(z_0) \oplus Q_{j} \ran A^{\orders{j}}(z_0) \\ &= \ran A^{\orders{j-1}}(z_0) \oplus Q_{j} \ran A^{\orders{j}}(z_0),
\end{align*} 
Therefore, $\ran A^{\orders{j-1}}(z_0) \subset \ran A^{\orders{j}}(z_0)$. Since $\ran P_{j+1} = \ran A^{\orders{j}}$, we conclude that \eqref{eqinclusion} holds. 
\end{proof}
\begin{proof}[Proof of Proposition \ref{propo3}]
\eqref{eqprojec} directly follows from \eqref{eqinclusion}, and thus the detailed proof is omitted. Moreover, observing that $P_{j}P_{j+\ell}=P_j$, $P_{j}Q_{j+\ell}=0$,  $Q_{j}P_{j+\ell}=P_{j+\ell}-P_{j}$ and $Q_{j}Q_{j+\ell} = Q_{j+\ell}$ for $\ell\geq 1$, we find that $\Pi_{j=1}^k(P_{j}+(z-z_0)Q_j) =  \sum_{j=0}^{k}(P_{j+1}-P_{j})(z-z_0)^j$, where $P_{k+1} = I$ and $P_0=0$.  
 Since $A^{\orders{k}}(z)$ is holomorphic, we know from Proposition \ref{propo2} that 
	\begin{align} 
	A(z) = &\left[ \sum_{j=0}^{k}(P_{j+1}-P_{j})(z-z_0)^j \right]\left[ \sum_{j=0}^\infty	A^{\orders{k}}_{j,z_0}(z-z_0)^j \right], \label{lameqpf01}
	\end{align}	
where $A^{\orders{k}}_{j,z_0}$ is the coefficient associated with $(z-z_0)^j$ of the Taylor series of $A^{\orders{k}}(z)$ at $z=z_0$. 
	Since $A(z)$ is holomorphic at $z=z_0$, we have
	\begin{equation*}
	A(z) = \sum_{j=0}^{\infty}A_{j,z_0} (z-z_0)^j.
	\end{equation*}
	For $0\leq m \leq k$, we equate the coefficients of $(z-z_0)^m$ in \eqref{lameqpf01} and obtain 
	\begin{equation*}
	A_{m,z_0} =  \sum_{j=0}^m (P_{m+1-j} - P_{m-j}) A^{\orders{k}}_{j,z_0}. 
	\end{equation*}
Since $(P_{j+1}-P_j) (P_{\ell+1}-P_\ell) = 0$ if $j \neq \ell$, we find that the following hold: 
\begin{align}
&(P_1-P_0)A_{0,z_0} = (P_1-P_0)A^{\orders{k}}_{0,z_0}, \notag   \\
&(P_2-P_1)A_{1,z_0} = (P_2-P_1)A^{\orders{k}}_{0,z_0}, \notag \\
&\vdots \notag \\
&(P_{k+1}-P_k)A_{k,z_0} = (P_{k+1}-P_k)A^{\orders{k}}_{0,z_0}. \notag
\end{align}
Since $\sum_{j=0}^{k}(P_{j+1}-P_j) = P_{k+1}-P_0 = I$, we find that 
\begin{align*}
A^{\orders{k}}(z_0) = A^{\orders{k}}_{0,z_0}  = (P_1-P_0)A_{0,z_0}+ \cdots +  (P_k-P_{k-1})A_{k-1,z_0} + (I-P_k)A_{k,z_0},
\end{align*}
as desired.
\end{proof}
\begin{proof}[Proof of Proposition \ref{cor1}]
\ref{cor1a} $\Leftrightarrow$ \ref{cor1b} is an immediate consequence of Proposition \ref{propo2}. We will show \ref{cor1b} $\Rightarrow$ both \ref{cor1c} and \ref{cor1d}, \ref{cor1c} $\Leftrightarrow$ \ref{cor1d},  and  both \ref{cor1c} and \ref{cor1d} $\Rightarrow$ \ref{cor1b}.

First note that 
the invertibility of $A^{\orders{d}}(z_0)=\sum_{j=0}^d (P_{j+1}-P_{j}) A_{j,z_0}$ implies that
  	\begin{equation} \label{eqrequired1}
	\mathcal B =\ran \left(\sum_{j=0}^d (P_{j+1}-P_{j}) A_{j,z_0}\right)  
	\end{equation}
and 
	\begin{equation} \label{eqrequired2}
\{0\} = \ker \left(\sum_{j=0}^d (P_{j+1}-P_{j}) A_{j,z_0}\right).
	\end{equation}
Since $(P_{j+1}-P_j)(P_{\ell+1}-P_{\ell})=0$ unless $j=\ell$ and $\sum_{j=0}^d (P_{j+1}-P_j)=I$, $\ran (P_{j+1}-P_j) =   \ran (P_{j+1}-P_{j}) A_{j,z_0}$ and we find that \eqref{eqrequired1} is equivalent to the direct sum condition given in (iii).  
Moreover, from the fact that $(P_{j+1}-P_j)(P_{\ell+1}-P_{\ell})=0$ for $j\neq \ell$, we find that $x \in \ker (\sum_{j=0}^d (P_{j+1}-P_{j}) A_{j,z_0})$ must satisfy $x \in \ker (P_{j+1}-P_{j}) A_{j,z_0}$ for all $j=1,\ldots,d$. Combining this result with \eqref{eqrequired2}, we find that   
\begin{equation*}
\ker \left(\sum_{j=0}^d (P_{j+1}-P_{j}) A_{j,z_0}\right)= \cap_{j=0}^d \ker  (P_{j+1}-P_{j}) A_{j,z_0}.
\end{equation*}
Thus (ii) $\Rightarrow$ (iii) and (iv). 

Note that $A^{\orders{d}}(z_0) = \sum_{j=0}^d (P_{j+1}-P_{j}) A_{j,z_0}$ is a Fredholm operator of index zero (see Proposition \ref{propo2}), and hence $\dim(\coker A^{\orders{d}}(z_0)) =\dim(\ker A^{\orders{d}}(z_0))$. Thus if either of \eqref{eqrequired1} or \eqref{eqrequired2} is true, then the other is also true. Combining this with the fact that  \eqref{eqrequired1} (resp.\  \eqref{eqrequired2})  is equivalent to \ref{cor1b} (resp.\ \ref{cor1c}) by the properties of $P_j$, we find that \ref{cor1c} $\Leftrightarrow$ \ref{cor1d}. 

If $A^{\orders{d}}(z_0)$ is not invertible, then from the fact that $A^{\orders{d}}(z_0) = \sum_{j=0}^d (P_{j+1} - P_{j}) A_{j,z_0}$, it is straightforward to see that either \eqref{eqrequired1} or \eqref{eqrequired2} cannot hold. Thus, \ref{cor1c} and \ref{cor1d} $\Rightarrow$ \ref{cor1b}.
\end{proof}
\begin{proof}[Proof of Proposition \ref{propo4}]
From \eqref{eqexpanG} and \eqref{lameqpf01}, we observe that 
	\begin{equation*} 
	A(z) = \left[ \sum_{j=0}^d (P_{j+1}-P_{j})(z-z_0)^j \right]\left[ \sum_{j=0}^\infty G_{j,z_0}(z-z_0)^j \right]
	\end{equation*}	
By equating the coefficients of $(z-z_0)^{k}$, we find the following:
\begin{align*}
A_{k,z_0} = \begin{cases}
 \sum_{j=0}^k (P_{k+1-j} - P_{k-j}) G_{j,z_0}  \quad &\text{ if $0\leq k \leq d$},\\
 \sum_{j=0}^d (P_{d+1-j} - P_{d-j}) G_{k-d+j,z_0}  \quad &\text{ if $k \geq d+1$}.\\
\end{cases}
\end{align*}
Since $(P_{j+1}-P_j) (P_{\ell+1}-P_\ell) = 0$ if $j \neq \ell$, we find that the following hold for any $k \geq 0$: 
\begin{align}
&(P_1-P_0)A_{k,z_0} = (P_1-P_0)G_{k,z_0}, \notag  \\
&(P_2-P_1)A_{k+1,z_0} = (P_2-P_1)G_{k,z_0}, \notag\\
&\vdots \notag\\
&(P_{d+1}-P_d)A_{k+d,z_0} = (P_{d+1}-P_d)G_{k,z_0}.  \notag
\end{align}
 Since $\sum_{j=0}^d(P_{j+1}-P_j) = I$, we find that $G_{k,z_0} = \sum_{j=0}^{d}(P_{j+1}-P_j) A_{k+j,z_0}$.

The characterization of $H_{j,z_0}$ in \eqref{eqpropo} can easily be deduced from the identity $G(z)G(z)^{-1} = I$, and hence the detailed proof is omitted. 
\end{proof}
\begin{proof}[Proof of Proposition \ref{propo5}]	
	We know from Proposition \ref{propo2} that 
\begin{equation*}
	A(z)^{-1} =  G(z)^{-1}(P_d + (z-z_0)^{-1} Q_d)\cdots(P_1 + (z-z_0)^{-1} Q_1). 
\end{equation*}
We find that $P_{j+\ell}P_{j}=P_j$, $P_{j+\ell}Q_{j}=P_{j+\ell}-P_{j}$,  $Q_{j+\ell}P_{j}=0$ and $Q_{j+\ell}Q_{j} = Q_{j+\ell}$ for $\ell\geq 1$. {Thus, we find that 
	\begin{align}
&	A(z)^{-1} = G(z)^{-1} \sum_{j=0}^{d}(P_{j+1} - P_j)(z-z_0)^{-j} \notag \\
&= \left(\sum_{j=0}^{\infty} H_j (z-z_0)^{j}\right) \left( \sum_{j=0}^{d}(P_{j+1} - P_j)(z-z_0)^{-j} \right)\notag\\
&= \sum_{j=0}^{d-1} \left(\sum_{\ell=0}^{j}H_{j-\ell} (P_{d+1-\ell}-P_{d-\ell})\right) (z-z_0)^{j-d}   +\sum_{j=0}^{\infty} \left(\sum_{\ell=0}^{d} H_{j+\ell} (P_{\ell+1}-P_{\ell})\right) (z-z_0)^{j}\notag\\
&= \sum_{j=-d}^{-1} \left(\sum_{\ell=0}^{j+d}H_{j+d-\ell} (P_{d+1-\ell}-P_{d-\ell})\right) (z-z_0)^{j}   +\sum_{j=0}^{\infty} \left(\sum_{\ell=0}^{d} H_{j+\ell} (P_{\ell+1}-P_{\ell})\right) (z-z_0)^{j}.
 \notag
	\end{align}
This proves the desired expression of $A(z)^{-1}$.}
\end{proof}
\begin{proof}[Proof of Proposition \ref{propapplication}]	
To show  \ref{propapplicationa}, we apply the equivalent linear filter induced by $(1-z)A(z)^{-1}$ to both sides of \eqref{eq001} with $A(z)$ in \eqref{eq01} (see e.g., the proofs of Theorems 3.1 and 4.1 of \cite{BS2018}). We then know from Propositions \ref{propo4} and \ref{propo5} that  $\Delta X_t:= X_t-X_{t-1} = -\Psi_{-1}\varepsilon_t +  (\nu_t-\nu_{t-1})$, where $\nu_t = \sum_{j=0}^\infty \Phi_j \varepsilon_{t-j}$ and $\Phi_j$ is the Taylor coefficient around $z=0$ associated with $(z-z_0)^j$ of the analytic part of the Laurent series of $A(z)^{-1}$  (see the proof of Proposition 4.1 of \cite{seo_2023_fred}). Then the desired representation of $X_t$ and the expression of $\Psi_j$ given in \ref{propapplicationa} are deduced without difficulty. 

To show  \ref{propapplicationb}, we apply the linear filter induced by $(1-z)^2A(z)^{-1}$, and obtain the desired results in a similar manner. 
\end{proof}

\end{appendices}



\end{document}